\newcommand{\expect}[1]{\mathbb{E}\big\{#1\big\}}
\newcommand{\script}[1]{{{\cal{#1} }}}
\newtheorem{lemma}{\textbf{Lemma}}
\newtheorem{theorem}{\textbf{Theorem}}
\begin{document}

\title{Optimal Demand Response with Energy Storage Management}
\author{\large{Longbo Huang, Jean Walrand, Kannan Ramchandran}  
\thanks{Longbo Huang  (http://www.eecs.berkeley.edu/$\sim$huang), Jean Walrand, and Kannan Ramchandran are with the EECS department of University of California, Berkeley, CA, 94720.  
} 
\markboth{Draft}{Huang}
}

\maketitle
\thispagestyle{empty}
\pagestyle{empty}

\begin{abstract}
In this paper, we consider the problem of optimal demand response and energy storage management for a power consuming entity. The entity's objective is to find an optimal control policy for deciding how much load to consume, how much power to purchase from/sell to the power grid, and how to use the finite capacity energy storage device and renewable energy, to minimize his average cost, being the disutility due to load-shedding and cost for purchasing power. Due to the coupling effect of the finite size energy storage, such problems are challenging and are typically tackled using dynamic programming, which is often complex in computation and requires substantial statistical information of the system dynamics. We instead develop a low-complexity algorithm called Demand Response with Energy Storage Management \textsf{(DR-ESM)}. \textsf{DR-ESM} does \emph{not} require any statistical knowledge of the system dynamics, including the renewable energy and the power prices. It \emph{only requires the entity  to solve a small convex optimization program with $6$ variables and $6$  linear constraints} every time for decision making. We prove that \textsf{DR-ESM} is able to achieve near-optimal performance and explicitly compute the required energy storage size. 
\end{abstract}

\vspace{-.2in}
\section{Introduction}
The increasing penetration of renewable energy and distributed generation inevitably increase the uncertainty in the smart grid.
Energy storage reduces the impact of these uncertainties by smoothing out fluctuations and reducing the mismatch between supply and demand  \cite{eilyan-storage-acc11}, \cite{pravin-risk11}. 
Under such circumstances, in order to guarantee reliable delivery of electricity to consumers,  and to ensure stability of the transmission and distribution systems, energy storage technology and demand response schemes are both being  integrated into the power grid. 
Therefore, it is essential to design control schemes that best utilize both approaches.  

In this paper, we consider the problem of optimal energy management for power  consumers with energy storages. Specifically, we consider a power consuming entity, e.g., a group of households or a commercial building, who is equipped with a finite energy storage device, and needs to meet his power demand by using his renewable energy, stored power, and purchased power from the grid. The energy consuming entity pays the grid for drawing power from it, but can also sell power to the grid to compensate some of the costs. The objective of the  energy consuming entity is to find a control policy for deciding how much load to consume, when and how much power to draw from the grid, how to charge the energy storage, and how to sell power back to the grid, so as to minimize its time average cost, i.e., disutility due to load shedding and payment to the grid. 

This problem is a challenging problem. The finite capacity of the energy storage device couples all the storage charging/discharging actions across time. The power prices for purchasing power from the grid is time-varying, which results in time-varying costs. Also, the available renewable energy is stochastic and may be difficult to forecast long beforehand. Moreover, the power consuming entity's disutility due to power consumption may also vary over time due to the changing system environment, e.g., temperature. Finally, the entity's ability to carry out demand response further complicates the problem. 

There have been many previous work on developing optimal control schemes for optimally utilizing energy storage systems and demand response in the smart grid. 
\cite{residential-storage11}, \cite{elgamal-storage11}, \cite{koutsopoulos-11} formulate the problem of storage management using dynamic programming (DP) and derive threshold-based control policies. 
\cite{boyd-storage11} develops energy storage control policies using receding horizon control. \cite{opf-storage-chandy}, \cite{opf-dist-storage-gayme} formulate the problem of finite horizon storage management as convex programs.  
\cite{distributed-sotrage} uses quadratic control techniques to study the scaling effect of energy storage in the power grid. 
\cite{nali-dr-11} considers the problem of demand response with energy storage in a finite horizon, and formulates the problem as a convex optimization program. 
\cite{libin-cdc-11} and \cite{hwr-dr-2012} develop optimal multi-stage power procurement and demand response schemes that do not include storage. 
However, the aforementioned works either assume that the system is static, or require complete statistical knowledge of all the system dynamics, or ignore the physical energy storage capacity constraints, or do not include energy storage. 
%
Three recent works \cite{control-rechargeable-twc10},  \cite{rahulneely-storage}, \cite{huangneely-energy-mobihoc} construct low-complexity algorithms for energy management based on the Lyapunov optimization technique  \cite{neelynowbook}, and explicitly consider the actual energy storage control. 
However, in these works, the energy storage devices are used mainly as a tool for storing and supplying power for the consumers. Thus, they did not investigate the potential economic aspects of the energy storage, i.e., selling power back to the grid, as well as demand response.

In this paper, we consider both demand response and energy storage management. We explicitly take into account the fact that the energy storage has finite capacity and the system environment can be time-varying. We develop a light-weight energy management scheme called demand response with energy storage management   (\textsf{DR-ESM}), which does \emph{not} require any statistical knowledge of the system dynamics, and only requires  the user to \emph{solve a simple convex optimization problems with $6$ variables and $6$ linear constraints} for decision making. We also explicitly compute the required energy storage size, and prove that \textsf{DR-ESM} is able to achieve near-optimal performance with the chosen energy storage capacity.

This paper is organized as follows. In Section \ref{section:model}, we state our system model. In Section \ref{section:algorithm-esm}, we first develop the energy storage management algorithm (\textsf{ESM}) for optimal load serving and analyze its performance. We then extend our results to include demand response and present the demand response with energy storage management  algorithm (\textsf{DR-ESM}) in Section \ref{section:algorithm-esm-dr}. We generalize our results to Markovian systems in Section \ref{section:markov}. 
Simulation results are presented in Section \ref{section:simulation}. We then conclude our paper in Section \ref{section:conclusion}.   


\section{System Model}\label{section:model}

We consider a system where a power consuming entity (called user in the following), e.g., a group of residential users, or a commercial building, is trying to meet his  demand by purchasing power from the grid, and is trying to utilize his renewable energy and  a finite-capacity energy storage device to minimize his cost. The system is depicted in Fig. \ref{fig:storagebs}. We assume that the system operates in slotted time, i.e., $t\in\{0, 1, ...\}$. 
\begin{figure}[cht]
\centering
\includegraphics[height=1.4in, width=3.2in]{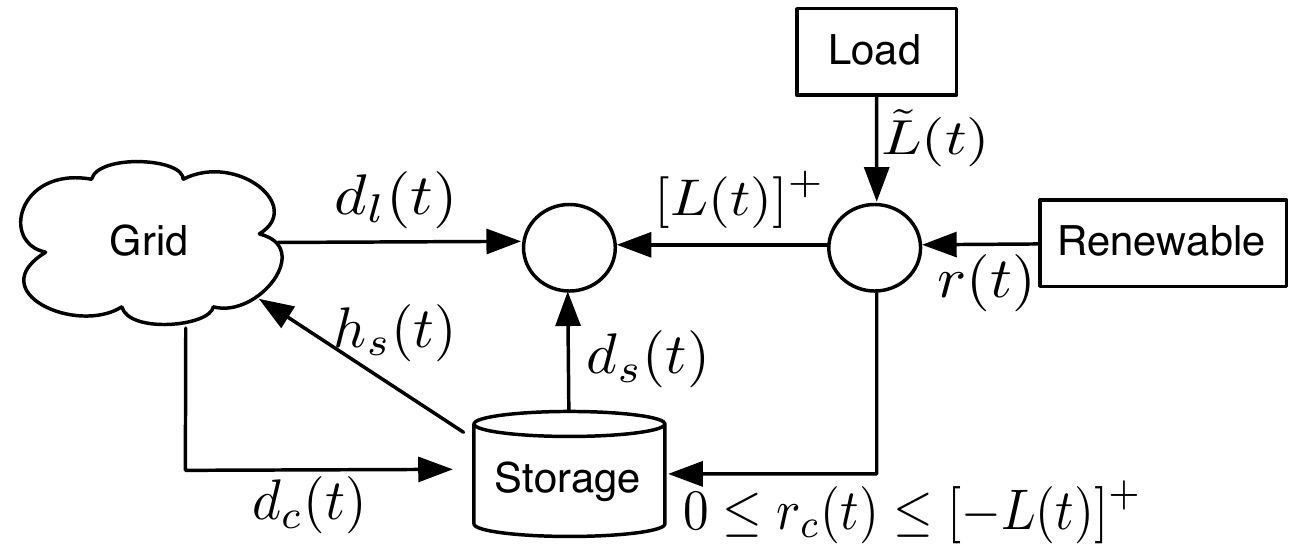}
\caption{\small{\textbf{User with energy storage}. 
$\tilde{L}(t)$ is the user load at time $t$, $r(t)$ is the available renewable energy. $L(t)\triangleq \tilde{L}(t)-r(t)$ is the \emph{residual demand}. $d_l(t)$ and $d_c(t)$ are the purchased power for serving the load and charging the storage, respectively. $h_s(t)$ is the power sold back to the grid  and $d_s(t)$ is the power drawn from the storage to serve the load. $r_c(t)$ is the amount from the excessive renewable energy to charge the energy storage. The objective of the user is to minimize his cost. 
}}\label{fig:storagebs}
\vspace{-.2in}
\end{figure}

\subsection{Disutility and load serving}
In every time slot, the user first decides how much load to consume. We denote this decision by $\tilde{L}(t)$ and assume $0\leq \tilde{L}(t)\leq L_{\textsf{max}}$. 
Depending on the system condition, however, certain power consumption level may incur some disutility to the user. For instance, suppose the user is operating a heater to warm his house. Then, if the outside temperature is low, operating the heater at a lower heating level, though being less costly, may lead to some discomfort for the user. 
 
To model such a system-dependent disutility and the fact that the system condition may be time-varying, we assume that there exists a  \emph{system state} $S(t)$ which captures the environmental aspect of the disutility function of the user, e.g., temperature.  Then,  the disutility of the user at time $t$ is determined by a general function $D(\tilde{L}(t), S(t))$. 
We assume that $S(t)$ takes value from some finite set $\script{S}$, and that the function $D(\tilde{L}(t), S(t))$ is convex in $\tilde{L}(t)$ for every system state $S(t)\in\script{S}$ and is known to the user. For instance, one example of $D(\tilde{L}(t), S(t))$ can be: 
\begin{eqnarray}
D(\tilde{L}(t), S(t)) = \beta_{S(t)} \big(L^{S(t)}_{\textsf{target}}-\tilde{L}(t) \big)^2. \label{eq:dis-utility-ex}
\end{eqnarray}
Here $L^{S(t)}_{\textsf{target}}$ denotes the target consumption level and $\beta_{S(t)}$ measures how the disutility increases as the power level deviates from the target level. 

We denote $r(t)$ the available renewable energy at time $t$, and define $L(t)\triangleq \tilde{L}(t)-r(t)$ to be the \emph{residual demand}. If $L(t)>0$ , then it is treated as a  normal load. Otherwise $-L(t)$ will be the excessive renewable energy, and can be used to charge the storage. 
\footnote{Note here we assume implicitly that we always use the renewable energy to serve the load whenever possible, and we also do not consider directly selling the renewable energy to the grid. Our results can easily be extended to the case when such direct selling is allowed.}  

%

The user can serve the residual demand $L(t)$ with two power  sources: power $d_l(t)$ purchased from the power grid at a unit-power price $p(t)$, and power $d_s(t)$ drawn from the energy storage at a zero price.  \footnote{Here we implicitly assume that the cost for operating the storage is zero. Our results can be extended to the case when there is cost associated with utilizing the storage. }
Similarly, the user can charge the energy storage with two power sources: power $d_c(t)$ purchased from the grid at price $p(t)$, and power $r_c(t)$ obtained from the excessive renewable energy.  
%
If the user stores enough energy, he can also sell some power back to the grid at a unit  price $q(t)$. We denote the amount sold back by  $h_s(t)$. 

In practice, there will be physical constraints on each of the power components. We model them as follows: 
\begin{itemize}
\item There is a capacity limit on how much power the user can draw from  the grid  at any time. We denote it by $c_{\textsf{grid}}$. Hence, in every time slot, we have: 
\begin{eqnarray}
d_l(t), d_c(t)\geq0, \,\, d_l(t)+d_c(t)\leq c_{\textsf{grid}}. \label{eq:grid-cond}
\end{eqnarray}
\item There is a maximum charging rate of the energy storage, denoted by  $c_{\textsf{char}}$, so that: 
\begin{eqnarray}
r_c(t)\geq0, \,\,d_c(t)+r_c(t)\leq c_{\textsf{char}}. \label{eq:charging-cond}
\end{eqnarray}
\item There is a maximum discharging rate of the energy storage, denoted by $c_{\textsf{dis}}$, so that: 
\begin{eqnarray}
h_s(t), d_s(t) \geq 0, \,\, h_s(t) + d_s(t)\leq c_{\textsf{dis}}. \label{eq:discharging-cond}
\end{eqnarray}
\end{itemize}
%
Besides these physical constraints, we note that at any time, the user's action must also ensure the following feasibility conditions: 
\begin{eqnarray}
d_l(t) + d_s(t) = [L(t)]^+, \,\, 0\leq r_c(t)\leq [-L(t)]^+. \label{eq:action-feasible}
\end{eqnarray}
That is, at every time slot, the load must be balanced using the power from the grid and the energy storage, and the amount of renewable energy used to charge the storage must be no more than the excessive renewable energy. 

We assume that  the renewable energy is bounded, i.e., $0\leq r(t)\leq r_{\textsf{max}}$. Then,  the residual load is also bounded, i.e.,  $-r_{\textsf{max}}\leq L(t)\leq L_{\textsf{max}}$. We also assume that the buying and selling prices are bounded for all time, i.e., $0\leq p(t) \leq p_{\textsf{max}}, 0\leq q(t)\leq q_{\textsf{max}}$. 

\subsection{Energy storage dynamics}
Under the charging/discharging actions, we assume that the energy storage level evolves according to the following dynamics: 
\begin{eqnarray}
E(t+1) = E(t) - \eta_e (d_s(t)+h_s(t)) + \eta_i (d_c(t)+r_c(t)). \label{eq:storage-dyn}
\end{eqnarray}
Here $\eta_e\geq1$ and $\eta_i\leq1$ are the coefficients associated with discharging and charging power from the storage. 
Note that by using (\ref{eq:storage-dyn}), we first assume that the energy storage device has an infinite capacity. Later we will show that under our algorithms, the energy queue level remains bounded \emph{deterministically} for all time and the needed storage size can be computed explicitly. Hence we only need a finite energy capacity to implement the algorithms.

Note that for the discharging decisions to be feasible,   the following energy-availability (\textsf{EA}) constraint must be met at all time:
\begin{eqnarray}
(\textsf{EA}):\,\, E(t) \geq \eta_e (d_s(t)+h_s(t)). \label{eq:storage-underflow} 
\end{eqnarray}
That is, there must be more stored energy than what is consumed. 

\vspace{-.1in}
\subsection{Objective}
Assuming that the (\textsf{EA}) condition is met, the user's instantaneous cost is given by its disutility plus the cost for purchasing power from the grid minus its gain from selling power, i.e., \footnote{Note that our results can also be extended to the case when $p(t)$ is a function of $d_l(t) +d_c(t)$. }
\begin{eqnarray}
f(t) \triangleq   D(\tilde{L}(t), S(t))+p(t) [d_l(t) +d_c(t)]  - q(t) h_s(t). \label{eq:cost-utility}
\end{eqnarray}
The user's objective is to find a control policy for determining the load consuming, purchasing/selling, and charging/discharging actions, so as to minimize his long term time average cost, defined:
\begin{eqnarray}
f_{\textsf{av}} \triangleq \limsup_{t\rightarrow\infty} \frac{1}{t}\sum_{\tau=0}^{t-1} \expect{f(\tau)}. \label{eq:avg-cost}
\end{eqnarray}
We call an energy management scheme that ensures (\textsf{EA}) for all time a \emph{feasible} scheme, and use $f_{\textsf{av}}^*$ to denote the infimum average cost that any feasible policy can achieve with a finite capacity energy storage. 

For ease of presentation, below we first assume that the quadruple $(p(t), q(t), r(t), S(t))$ is  i.i.d. over each time slot. However, we allow the components to be arbitrarily correlated, which is typically the case in practice. Later we will relax the i.i.d. assumption and  generalize the results to the case when $(p(t), q(t), r(t), S(t))$  is Markovian.  
Finally, we assume the following on the system parameters: 
\begin{eqnarray}
 \eta_i c_{\textsf{grid}}\geq \eta_eL_{\textsf{max}}. \label{eq:capacity-assumption}
\end{eqnarray}
Since $\eta_e$ is the efficiency for power discharging, we see that (\ref{eq:capacity-assumption}) can be viewed as saying that,  if the storage can be charged with the full power rate $c_{\textsf{grid}}$, then it can be used to support a full load for one time slot. Such a condition can typically be satisfied in practice, since $c_{\textsf{grid}}$ is usually large compared to $L_{\textsf{max}}$. 

\vspace{-.1in}
\subsection{Discussion of the model}
Our model is similar to those used in \cite{elgamal-storage11}, \cite{rahulneely-storage},  and  \cite{huangneely-energy-mobihoc}. 
However, in \cite{elgamal-storage11}, the problem is solved using dynamic programming (DP) under various approximations of the charging/discharging capacity constraints and without demand response. In  \cite{rahulneely-storage} and \cite{huangneely-energy-mobihoc}, the storage is only used to support the user's load but not as an economic tool for the user. Thus, they do not consider the possibility of selling the stored power back to the grid as well as demand response. 
%

In the following, in order to best demonstrate our solution approach, we first consider the case when the user does not perform demand response, i.e., he only tries to serve the given load $\tilde{L}(t)$ every time slot.  
This is an important scenario which models cases when the power consumption level cannot be changed. The results derived for this load-serving case will  demonstrate our  main solution idea, and will be extended to incorporate demand response in Section \ref{section:algorithm-esm-dr}. 


\section{Algorithm Design for load serving}\label{section:algorithm-esm}
In this section, we first consider the case when $\tilde{L}(t)$ is a given stochastic process that satisfies $0\leq \tilde{L}(t)\leq L_{\textsf{max}}$, and that $(p(t), q(t), r(t), S(t), \tilde{L}(t))$ is an i.i.d. process. This corresponds to the case when the load is non-deferrable and has to be balanced in every time slot. In this case, the instantaneous cost becomes:
\begin{eqnarray}
f(t) \triangleq p(t) [d_l(t) +d_c(t)]  - q(t) h_s(t). \label{eq:cost-utility-load}
\end{eqnarray}

We will solve the following two important problems at the same time: (i) How should we size the capacity of the storage device according to the system parameters and performance requirements? (ii) Given the energy storage capacity, how can we optimally control the storage device with low-complexity algorithms that adapt quickly to the system dynamics and provide performance guarantees? 

These two problems are challenging. 
Indeed, the main difficulty in resolving them is the (\textsf{EA}) constraint which requires that no energy underflow happens for all time. This constraint 
couples all the actions across time. Hence, most prior works tackle it using DP, which typically requires substantial information of the system dynamics, and is  computationally expensive. 
We instead construct our algorithm based on the Lyapunov optimization approach used in \cite{rahulneely-storage} and \cite{huangneely-energy-mobihoc}. The main idea of our approach is to ``temporarily ignore'' the (\textsf{EA}) constraint and solve  the problem,  but then show that the constraint is automatically guaranteed by our algorithm. 
We will see that, using this approach, we are able to derive very light-weight algorithms that can easily be implemented in practice and guarantee near-optimal performance. 

\vspace{-.1in}
\subsection{The energy storage management algorithm (\textsf{ESM})}
To start, we first define two control parameters $\theta>0$ and $\epsilon>0$ (values to be specified later), where $\epsilon$ is used to control the distance between the performance of our algorithm and the optimum, and $\theta$ is used to determine the optimal energy consumption. 
We then define a Lyappunov function $G(t)=\frac{1}{2}(E(t)-\theta)^2$, and define the following one-step Lyapunov drift:
\begin{eqnarray}
\Delta(t) = \expect{G(t+1)-G(t)\left.|\right. E(t)}. 
\end{eqnarray}
The following lemma first obtains a basic property of the Lyaunov drift. 
\begin{lemma}\label{lemma:basic}
The Lyapunov drift satisfies the following inequality for all time: 
\begin{eqnarray}
\hspace{-.2in}&&\Delta(t) \leq B - (E(t) -\theta ) \expect{  \eta_e d_s(t) + \eta_eh_s(t)\label{eq:basic-drift}\\
\hspace{-.2in}&& \qquad\qquad\qquad\qquad\qquad\quad - \eta_i d_c(t) -\eta_ir_c(t) \left.|\right. E(t)}, \nonumber
\end{eqnarray}
where $B\triangleq\frac{1}{2}[\eta_e^2c_{\textsf{dis}}^2 +  \eta_i^2c_{\textsf{char}}^2]$. $\Diamond$
\end{lemma}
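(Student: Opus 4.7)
The plan is to expand the Lyapunov drift directly using the storage dynamics (\ref{eq:storage-dyn}), isolate the linear ``drift'' term from a quadratic ``variance'' term, and then bound the quadratic term by the constant $B$ using the physical charging/discharging rate constraints.

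First, I would let $X(t) \triangleq -\eta_e(d_s(t) + h_s(t)) + \eta_i(d_c(t) + r_c(t))$, so that the storage dynamics (\ref{eq:storage-dyn}) read $E(t+1) - \theta = (E(t) - \theta) + X(t)$. Expanding $G(t+1) - G(t) = \tfrac{1}{2}(E(t+1)-\theta)^2 - \tfrac{1}{2}(E(t)-\theta)^2$ by squaring yields the identity
\begin{eqnarray*}
G(t+1) - G(t) = (E(t) - \theta) X(t) + \tfrac{1}{2} X(t)^2.
\end{eqnarray*}
Taking conditional expectation given $E(t)$ gives $\Delta(t) = (E(t)-\theta)\,\mathbb{E}\{X(t)\mid E(t)\} + \tfrac{1}{2}\mathbb{E}\{X(t)^2\mid E(t)\}$.

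Next, I would bound the second moment term. Writing $X(t) = -A + C$ with $A \triangleq \eta_e(d_s(t) + h_s(t)) \geq 0$ and $C \triangleq \eta_i(d_c(t) + r_c(t)) \geq 0$, the elementary inequality $(A - C)^2 \leq A^2 + C^2$ (valid whenever $A, C \geq 0$) yields
\begin{eqnarray*}
\tfrac{1}{2} X(t)^2 \leq \tfrac{1}{2}\bigl[\eta_e^2 (d_s(t)+h_s(t))^2 + \eta_i^2 (d_c(t)+r_c(t))^2\bigr].
\end{eqnarray*}
Applying the discharging constraint $d_s(t)+h_s(t) \leq c_{\textsf{dis}}$ from (\ref{eq:discharging-cond}) and the charging constraint $d_c(t)+r_c(t) \leq c_{\textsf{char}}$ from (\ref{eq:charging-cond}) bounds the right-hand side deterministically by $B = \tfrac{1}{2}[\eta_e^2 c_{\textsf{dis}}^2 + \eta_i^2 c_{\textsf{char}}^2]$.

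Substituting this into the drift identity and rewriting $X(t)$ in terms of the original variables produces exactly (\ref{eq:basic-drift}). The only mildly nontrivial step is realizing one needs the nonnegativity of both $A$ and $C$ to drop the cross term $2AC$; this is guaranteed because all four decision quantities $d_s, h_s, d_c, r_c$ are nonnegative by (\ref{eq:charging-cond})--(\ref{eq:discharging-cond}). Everything else is routine algebra and application of the stated rate limits, so I do not anticipate any real obstacle.
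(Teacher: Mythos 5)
Your proof is correct and follows essentially the same route as the paper: expand $G(t+1)-G(t)$ via the storage dynamics, and bound the quadratic term by $B$ using the nonnegativity of the charge/discharge quantities together with the rate limits $c_{\textsf{dis}}$ and $c_{\textsf{char}}$. The paper performs the identical squaring and bounding steps, so there is nothing further to add.
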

\begin{proof}
See Appendix A. 
\end{proof}
To construct our algorithm, we define $V=1/\epsilon$, and add to both sides of (\ref{eq:basic-drift}) the term $V\expect{ f(t) \left.|\right. E(t)}$ to get:
\begin{eqnarray}
\hspace{-.2in}&&\Delta(t) + V\expect{ f(t) \left.|\right. E(t)} \label{eq:esm-drift-foo1}\\
\hspace{-.2in}&&\leq B + V\expect{p(t) [d_l(t) +d_c(t)]  - q(t) h_s(t) \left.|\right. E(t)}\nonumber\\
\hspace{-.2in}&&\qquad\, -\, (E(t) -\theta ) \expect{  \eta_e (d_s(t) + h_s(t)) \nonumber\\
\hspace{-.2in}&& \qquad\qquad\qquad\qquad\qquad\quad  - \eta_i d_c(t) -\eta_ir_c(t) \left.|\right. E(t)}\nonumber\\
\hspace{-.2in}&&= B - \expect{d_s(t)\eta_e(E(t)-\theta)\left.|\right. E(t)} \nonumber\\
\hspace{-.2in}&& \qquad-\,\, \expect{ h_s(t) [\eta_e(E(t)-\theta) + Vq(t) ]  \left.|\right. E(t)} \nonumber\\
\hspace{-.2in}&&   \qquad +\,\,  \expect{  d_l(t)Vp(t) \left.|\right. E(t)} \nonumber\\
\hspace{-.2in}&& \qquad + \,\, \expect{d_c(t) [ Vp(t) +\eta_i(E(t)-\theta)]     \left.|\right. E(t)}\nonumber\\
\hspace{-.2in}&& \qquad + \,\,\expect{r_c(t)\eta_i(E(t)-\theta)   \left.|\right. E(t) }. \nonumber
\end{eqnarray}
We then replace $d_l(t)=[L(t)]^+-d_s(t)$ in the above to get:
\begin{eqnarray}
\hspace{-.3in}&&\Delta(t) + V\expect{ f(t) \left.|\right. E(t)} \label{eq:esm-drift-foo-ref}\\
\hspace{-.3in}&&\leq  B + \expect{V[L(t)]^+p(t)\left.|\right.E(t)}  \nonumber\\
\hspace{-.3in}&&\qquad -\,\, \expect{ h_s(t) [\eta_e(E(t)-\theta) + Vq(t) ]  \left.|\right. E(t)} \nonumber\\
\hspace{-.3in}&& \qquad -\,\, \expect{d_s(t) [\eta_e(E(t)-\theta) +Vp(t)] \left.|\right. E(t)} \nonumber\\
\hspace{-.3in}&&\qquad + \,\, \expect{d_c(t) [ Vp(t) +\eta_i(E(t)-\theta)]     \left.|\right. E(t)}\nonumber\\
\hspace{-.2in}&& \qquad + \,\,\expect{r_c(t)\eta_i(E(t)-\theta)   \left.|\right. E(t) }. \nonumber
\end{eqnarray}
Our power management algorithm is then constructed using the ``min-drift'' principle of the Lyapunov optimization technique \cite{neelynowbook}:  at every time slot, choose a set of feasible charging/discharging and buying/selling actions to  minimize  the right-hand-side (RHS) of (\ref{eq:esm-drift-foo-ref}). Doing so, we obtain the following algorithm:

\underline{\textsf{Energy Storage Management (ESM)}:} Use an energy storage of capacity $\theta+\eta_ic_{\textsf{char}}$ ($\theta$ will be specified later). At every time $t$, do: 
\begin{enumerate}
\item Observe the energy level $E(t)$, the residual load $L(t)$, and the power prices $p(t)$ and $q(t)$. \footnote{Note here observing the residual load $L(t)$ includes observing the renewable energy $r(t)$. This can be done  accurately since we are observing its instant value, which is equivalent to performing short time forecast.}  
 Define the following weights: 
\begin{eqnarray}
\hspace{-.6in}&&W_h(t) = \eta_e(E(t)-\theta) + \frac{q(t)}{\epsilon}, W_s(t) = \eta_e(E(t)-\theta) +\frac{p(t)}{\epsilon},\nonumber\\
\hspace{-.6in}&&W_c(t) = \eta_i(E(t)-\theta) + \frac{p(t)}{\epsilon}, W_r(t) = \eta_i(E(t)-\theta). \label{eq:weight-def}
\end{eqnarray}

\item Chooses $h_s(t)$, $d_l(t)$, $d_s(t)$, $d_c(t)$, and $r_c(t)$ to solve the following optimization problem subject to (\ref{eq:grid-cond}), (\ref{eq:charging-cond}), (\ref{eq:discharging-cond}), (\ref{eq:action-feasible}). Specifically, we solve: 
\begin{eqnarray}
\hspace{-.6in}&&\max: \,\,\, h_s(t) W_h(t) + d_s(t) W_s(t) \label{eq:esm-1}\\
\hspace{-.6in}&&\qquad\qquad\qquad\qquad\,\,  - d_c(t) W_c(t) - r_c(t)W_r(t)\nonumber\\
\hspace{-.6in}&&\quad\,\,\text{s.t.}\,\,\,\,  d_l(t) + d_s(t) = [L(t)]^+, \label{eq:esm-cond-1}\\
\hspace{-.6in}&&\qquad\quad\,\,  d_l(t)+d_c(t)\leq c_{\textsf{grid}}, \label{eq:esm-cond-2}\\
\hspace{-.6in}&&\qquad\quad\,\,  d_c(t)+r_c(t)\leq c_{\textsf{char}}, \label{eq:esm-cond-3}\\ 
\hspace{-.6in}&&\qquad\quad\,\,  h_s(t) + d_s(t)\leq c_{\textsf{dis}}, \label{eq:esm-cond-4}\\
\hspace{-.6in}&&\qquad\quad\,\,  r_c(t)\leq [-L(t)]^+, \label{eq:esm-cond-5}\\
\hspace{-.6in}&& \qquad\quad\,\,  d_l(t), d_c(t), h_s(t), d_s(t), r_c(t)\geq0. \nonumber
\end{eqnarray}
Then, perform the chosen actions. $\Diamond$
\end{enumerate}
Note that the complexity of $\textsf{ESM}$ is very low: at every time slot,  \emph{the user only has to solve a linear program with $5$ variables and $5$ linear constraints (\ref{eq:esm-cond-1})-(\ref{eq:esm-cond-5})}. This is a very simple task and can easily  be done. It also \emph{does not} require any statistical knowledge of the residual load $L(t)$, the renewable energy $r(t)$, and the power prices $p(t)$ and $q(t)$. Hence, it can easily be implemented in practice. We also note that  \textsf{ESM} does not explicitly take into account the (\textsf{EA})  constraint. However, we will show later that (\textsf{EA}) is automatically ensured by the \textsf{ESM} algorithm. 

\vspace{-.1in}
\subsection{Performance analysis of \textsf{ESM}}
The following theorem summarizes the performance of \textsf{ESM}. In the theorem, the parameter $\theta$ is defined: 
\begin{eqnarray}
\theta\triangleq\frac{\max[p_{\textsf{max}}, q_{\textsf{max}}]}{\epsilon  \eta_i} + \eta_e\min[L_{\textsf{max}}, c_{\textsf{dis}}]. \label{eq:theta-def}
\end{eqnarray}
\begin{theorem}\label{theorem:per-esm}
Suppose $\theta$ is chosen according to (\ref{eq:theta-def}), and $0\leq E(0)\leq \theta+\eta_ic_{\textsf{char}}$. 
Then, under \textsf{ESM}, we have: 
\begin{eqnarray}
\hspace{-.2in}&&0\leq E(t)\leq \theta +\eta_ic_{\textsf{char}}, \,\,\forall\,t, \label{eq:energy-bound}\\
\hspace{-.2in}&& \quad\,\,\, f_{\textsf{av}}^{\textsf{ESM}} \leq f_{\textsf{av}}^*+B\epsilon.\label{eq:cost-bound}
\end{eqnarray}
Here $f_{\textsf{av}}^{\textsf{ESM}}$ is the average cost achieved by \textsf{ESM}, $f_{\textsf{av}}^*$ is the optimal time average cost, and $B=\frac{1}{2}[\eta_e^2c_{\textsf{dis}}^2 +  \eta_i^2c_{\textsf{char}}^2]$. $\Diamond$
\end{theorem}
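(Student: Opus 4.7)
The plan is to establish the two claims separately. The pathwise energy bound (\ref{eq:energy-bound}) is a sample-path statement proved by induction on $t$, exploiting the sign structure of the weights in (\ref{eq:weight-def}) together with the choice of $\theta$ in (\ref{eq:theta-def}). The cost bound (\ref{eq:cost-bound}) is a time-average statement proved via the drift-plus-penalty inequality (\ref{eq:esm-drift-foo-ref}) combined with a comparison against a stationary $\omega$-only randomized policy attaining $f_{\textsf{av}}^*$. Notably, the $(\textsf{EA})$ constraint need not be enforced explicitly inside the \textsf{ESM} LP because (\ref{eq:energy-bound}) implies $(\textsf{EA})$ automatically.

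For the energy bound I induct on $t$, with the hypothesis $0\leq E(0)\leq \theta+\eta_i c_{\textsf{char}}$ as base case. Upper inequality: whenever $E(t)\geq \theta$, both $W_c(t)=\eta_i(E(t)-\theta)+p(t)/\epsilon$ and $W_r(t)=\eta_i(E(t)-\theta)$ are nonnegative, and since $d_c$ and $r_c$ appear with a negative sign in the objective (\ref{eq:esm-1}) and are lower-bounded by zero, the LP maximizer sets $d_c(t)=r_c(t)=0$; hence $E(t+1)\leq E(t)\leq \theta+\eta_i c_{\textsf{char}}$. Whenever $E(t)<\theta$, the dynamics (\ref{eq:storage-dyn}) add at most $\eta_i c_{\textsf{char}}$, so again $E(t+1)\leq \theta+\eta_i c_{\textsf{char}}$. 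Lower inequality: using $\eta_e\geq 1\geq \eta_i$ together with (\ref{eq:theta-def}), one verifies that $\theta-\max[p(t),q(t)]/(\epsilon\eta_e)\geq \eta_e\min[L_{\textsf{max}},c_{\textsf{dis}}]$. Thus whenever $E(t)\leq \eta_e\min[L_{\textsf{max}},c_{\textsf{dis}}]$, both $W_s(t)$ and $W_h(t)$ are nonpositive, forcing $d_s(t)=h_s(t)=0$ and so $E(t+1)\geq E(t)\geq 0$; above that threshold, the total discharge permitted by (\ref{eq:esm-cond-1}) and (\ref{eq:esm-cond-4}) cannot exceed $E(t)$, so $E(t+1)\geq 0$ continues to hold.

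For the cost bound I start from (\ref{eq:esm-drift-foo-ref}). By construction, \textsf{ESM} chooses the action tuple that minimizes this right-hand side over all choices satisfying (\ref{eq:esm-cond-1})--(\ref{eq:esm-cond-5}). The standard Lyapunov-optimization existence lemma then supplies an $\omega$-only stationary randomized policy $\pi^*$ that is per-slot feasible, attains mean cost arbitrarily close to $f_{\textsf{av}}^*$, and has balanced expected flow $\eta_i\smallexpect{d_c^{\pi^*}(t)+r_c^{\pi^*}(t)}=\eta_e\smallexpect{d_s^{\pi^*}(t)+h_s^{\pi^*}(t)}$. Substituting $\pi^*$ for the \textsf{ESM} actions in (\ref{eq:esm-drift-foo-ref}) and using that $\pi^*$ depends only on the i.i.d.\ exogenous state (hence is independent of $E(t)$) makes all $(E(t)-\theta)$ cross-terms vanish when conditional expectation is taken; combined with the min-drift optimality of \textsf{ESM}, this yields $\Delta(t)+V\smallexpect{f^{\textsf{ESM}}(t)\mid E(t)}\leq B+Vf_{\textsf{av}}^*$. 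Taking full expectations, telescoping over $\tau=0,\dots,t-1$, dividing by $tV$ with $V=1/\epsilon$, and invoking (\ref{eq:energy-bound}) so that $\smallexpect{G(t)}/(tV)\to 0$ delivers (\ref{eq:cost-bound}).

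The main obstacle is the construction of the comparison policy $\pi^*$: producing an $\omega$-only stationary randomized policy that is simultaneously feasible, $f_{\textsf{av}}^*$-optimal, and flow-balanced in expectation requires the usual Carath\'eodory argument on the convex hull of achievable per-slot cost/drift tuples, combined with a $\delta\downarrow 0$ approximation step. A subsidiary delicate point is the lower-bound case analysis, where one must rule out a discharge that drives $E(t)$ below zero when $W_h(t)>0$ but $E(t)$ sits only slightly above $\eta_e\min[L_{\textsf{max}},c_{\textsf{dis}}]$; this is handled by exploiting the sharper implication $E(t)>\theta-q(t)/(\epsilon\eta_e)$ that $W_h(t)>0$ already yields through (\ref{eq:theta-def}).
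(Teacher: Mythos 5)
Your upper bound on $E(t)$ and your proof of (\ref{eq:cost-bound}) follow the paper's argument essentially verbatim (induction using the signs of $W_c,W_r$; comparison of the drift-plus-penalty bound against a flow-balanced $\omega$-only stationary policy achieving $f_{\textsf{av}}^*$, then telescoping). The genuine gap is in the lower bound $E(t)\geq 0$. You assert that when $E(t)\leq\eta_e\min[L_{\textsf{max}},c_{\textsf{dis}}]$ the nonpositivity of $W_s(t)$ ``forces'' $d_s(t)=h_s(t)=0$. For $h_s(t)$ this is fine, but $d_s(t)$ is not a free variable bounded below by zero: it is pinned by the equality $d_l(t)+d_s(t)=[L(t)]^+$, and $d_l(t)$ competes with $d_c(t)$ for the shared grid capacity in (\ref{eq:esm-cond-2}). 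In the low-energy regime $W_c(t)<0$ as well, so the maximizer of (\ref{eq:esm-1}) wants $d_c(t)$ as large as possible; if the grid constraint binds this can leave $d_l(t)<[L(t)]^+$ and hence $d_s(t)>0$ (and when $\eta_e=\eta_i$ the LP is genuinely indifferent between serving the load from the grid and from the battery). So $E(t+1)\geq E(t)$ cannot be deduced from the sign of $W_s(t)$ alone.

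The paper closes exactly this hole with a case analysis --- at the optimum either $d_c^*+r_c^*=c_{\textsf{char}}$ or $d_l^*+d_c^*=c_{\textsf{grid}}$ must bind --- and in the latter case the chain (\ref{eq:lower-bound-foo}) gives $E(t+1)\geq E(t)-\eta_e L_{\textsf{max}}+\eta_i c_{\textsf{grid}}\geq E(t)$, which is precisely where the standing assumption (\ref{eq:capacity-assumption}), $\eta_i c_{\textsf{grid}}\geq \eta_e L_{\textsf{max}}$, enters. Your proposal never invokes (\ref{eq:capacity-assumption}); since the lower bound fails without it (if $c_{\textsf{grid}}<L_{\textsf{max}}$ a positive discharge is unavoidable and the storage can be drained), any complete proof must use it, so its absence is a reliable sign the key step is missing. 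The ``delicate point'' you flag about $h_s(t)$ when $E(t)$ sits just above $\eta_e\min[L_{\textsf{max}},c_{\textsf{dis}}]$ is real but secondary, and your proposed fix via $E(t)>\theta-q(t)/(\epsilon\eta_e)$ does not by itself yield $E(t)\geq\eta_e c_{\textsf{dis}}$ (e.g.\ when $\eta_i=\eta_e=1$ and $q(t)=\max[p_{\textsf{max}},q_{\textsf{max}}]$ the slack from (\ref{eq:theta-def}) vanishes); the essential missing ingredient is the $d_l$--$d_c$ coupling together with (\ref{eq:capacity-assumption}).
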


Note that (\ref{eq:energy-bound}) is very important. It shows that the energy level under \textsf{ESM} will never be negative and is \emph{deterministically} upper bounded, and provides the explicitly bound. 
This shows that the (\textsf{EA}) condition is ensured under \textsf{ESM}, and 
allows us to conveniently size our storage capacity and  implement the algorithm with a finite capacity energy storage. We also emphasize that (\ref{eq:energy-bound}) is indeed a \emph{sample path} result. Thus it holds under \emph{arbitrary} $(\tilde{L}(t), p(t), q(t), r(t), S(t))$ processes. This shows that our algorithm is indeed applicable under more general system dynamics. 

Below we present the proof of (\ref{eq:energy-bound}). The proof of (\ref{eq:cost-bound}) will be given in Appendix B. 
\begin{proof} (Theorem \ref{theorem:per-esm}) 
Let $d_l^*(t), d^*_c(t), d^*_s(t), h_s^*(t), r^*_c(t)$ be an optimal solution of (\ref{eq:esm-1}). 

We first prove the upper bound of (\ref{eq:energy-bound}) using induction. 
First we see that it holds for time $t=0$. 
Now assume the upper bound holds at time $t$. 
\begin{enumerate}
\item Suppose $E(t)\leq\theta$. Then, since $d_c(t)+r_c(t)\leq c_{\textsf{char}}$, we see that $E(t+1)\leq \theta+\eta_ic_{\textsf{char}}$. 
\item Now suppose $E(t)>\theta$. Then we have $W_c(t), W_r(t)>0$. Thus, we must have $d^*_c(t)=0$ and  $r^*_c(t)=0$. That is, once $E(t)>\theta$, the energy level will not further increase. Hence, $E(t+1)\leq E(t)\leq \theta+\eta_ic_{\textsf{char}}$. 
\end{enumerate}

Now we prove the lower bound using induction. Assume the lower bound holds at time $t$. 
\begin{enumerate}
\item Suppose $E(t)\geq \eta_e\min[L_{\textsf{max}}, c_{\textsf{dis}}]$. Since  the maximum amount that can be discharged from the storage is $ \eta_e\min[L_{\textsf{max}}, c_{\textsf{dis}}]$, we see that $E(t+1)\geq0$.

\item  Suppose $E(t)< \eta_e\min[L_{\textsf{max}}, c_{\textsf{dis}}]$. In this case, we see from (\ref{eq:theta-def}) that:  
\begin{eqnarray}
\eta_e(E(t)-\theta), \eta_i(E(t)-\theta)<-  \frac{\max[p_{\textsf{max}}, q_{\textsf{max}}]}{\epsilon}.\label{eq:theta-foo}
\end{eqnarray}
Using (\ref{eq:weight-def}), we see that: 
\begin{eqnarray}
W_h(t), W_s(t), W_c(t), W_r(t)<0. 
\end{eqnarray}
In this case, we first see from (\ref{eq:esm-cond-4}) that $h^*_s(t)=0$, i.e., no energy is sold back to the grid. 
%

Then, we see that either (i) $d_c^*(t)+r^*_c(t)=c_{\textsf{char}}$ or (ii) $d^*_l(t)+d_c^*(t)=c_{\textsf{grid}}$. 
This is so because if none of them happens,  then we can increase $d_c^*(t)$ to further increase the value of the objective function of (\ref{eq:esm-1}), which contradicts the fact that $d^*_c(t)$ is the optimal solution. 

Suppose now $d_c^*(t)+r_c^*(t)=c_{\textsf{char}}$.  Then, from the objective function (\ref{eq:esm-1}) and the constraints (\ref{eq:esm-cond-1}) and (\ref{eq:esm-cond-2}), we see that $d^*_l(t)$ should be as large as possible, so that we can use a minimum $d_s^*(t)$ to maximize (\ref{eq:esm-1}). Hence, we have: 
\begin{eqnarray}
d^*_l(t) = \min[c_{\textsf{grid}}- d^*_c(t), [L(t)]^+]. 
\end{eqnarray}
This implies that either $d^*_l(t)+d_c^*(t)=c_{\textsf{grid}}$ or $d^*_l(t)=[L(t)]^+$. If $d^*_l(t)=[L(t)]^+$, then $d_s^*(t)=0$. Since we also have $h^*_s(t)=0$ and $d_c^*(t), r_c^*(t)\geq0$, we see that: 
\begin{eqnarray}
\hspace{-.4in}&&E(t+1) = E(t) - \eta_e (d^*_s(t)+h^*_s(t)) + \eta_i (d^*_c(t)+r^*_c(t))\nonumber\\
\hspace{-.4in}&&\qquad\qquad \geq E(t). \nonumber
\end{eqnarray}
Now suppose instead we have $d^*_l(t)+d_c^*(t)=c_{\textsf{grid}}$. Using the facts that  $d_l^*(t)+d^*_s(t)=[L(t)]^+$ and $[L(t)]^+\leq L_{\textsf{max}}$, we have: 
\begin{eqnarray}
\hspace{-.6in}&&E(t+1) = E(t) - \eta_e (d^*_s(t)+h^*_s(t)) + \eta_i (d^*_c(t)+r^*_c(t))\nonumber\\ 
\hspace{-.6in}&&\qquad\qquad = E(t) - \eta_e ([L(t)]^+-d^*_l(t)) + \eta_i (d^*_c(t)+r^*_c(t))\nonumber\\ 
\hspace{-.6in}&& \qquad\qquad  \geq E(t) -\eta_eL_{\textsf{max}} + \eta_i(d^*_l(t)+d^*_c(t))\nonumber\\
\hspace{-.6in}&&\qquad\qquad = E(t) -\eta_eL_{\textsf{max}} +  \eta_i c_{\textsf{grid}}\nonumber\\
\hspace{-.6in}&& \qquad\qquad \geq E(t), \label{eq:lower-bound-foo}
\end{eqnarray}
where the last step follows from (\ref{eq:capacity-assumption}). This thus means that whenever $E(t)< \eta_e\min[L_{\textsf{max}}, c_{\textsf{dis}}]$, the energy level will not decrease. Hence $E(t+1)\geq E(t)\geq0$. 
%
\end{enumerate}
This completes the proof of  (\ref{eq:energy-bound}). 
\end{proof}

From the above proof, we notice that whenever $E(t)>\theta$, \textsf{ESM} will set $r_c(t)=0$. This is an important feature in that it guarantees the boundedness of the stored energy level. Hence, \textsf{ESM} can be implemented with finite energy storage devices. 
The reason why \textsf{ESM} can afford not to charge the energy storage with the free renewable energy is because under the chosen capacity and \textsf{ESM}, the loss of optimality will not be more than $O(\epsilon)$ even if we occasionally waste a small fraction of the renewable energy. In practice, we can modify \textsf{ESM} to 
always store excessive renewable energy, and control the system with a ``virtual'' control sequence that tracks the energy level under the original \textsf{ESM} algorithm.

\vspace{-.1in}
\section{Energy management with demand response}\label{section:algorithm-esm-dr}
In this section, we extend our results to the case when the user also performs demand response. Specifically, in every time slot,  the user also chooses his power consumption level $0\leq\tilde{L}(t)\leq L_{\textsf{max}}$, and is willing to change his consumption according to the system condition. 
Recall that in this case, the user's instant cost becomes: 
\begin{eqnarray}
f(t) \triangleq   D(\tilde{L}(t), S(t))+p(t) [d_l(t) +d_c(t)]  - q(t) h_s(t), \label{eq:cost-utility-dr}
\end{eqnarray}
where $D(\tilde{L}(t), S(t))$ is the disutility experienced by the user for consuming a power level $\tilde{L}(t)$.

\vspace{-.1in}
\subsection{\textsf{ESM} with demand response}
Here we construct our algorithm to incorporate demand response. 
Using a similar argument as before, we have: 
\begin{eqnarray*}
\hspace{-.3in}&&\Delta(t) + V\expect{ f(t) \left.|\right. E(t)} \\
\hspace{-.3in}&&\leq B -\expect{h_s(t) [\eta_e(E(t)-\theta) + Vq(t)]  \left.|\right. E(t)} \\
\hspace{-.3in}&&  \qquad+ \,\,\expect{ d_l(t) Vp(t)  \left.|\right. E(t)}  \\
\hspace{-.3in}&&  \qquad- \,\,\expect{d_s(t)\eta_e(E(t)-\theta)  \left.|\right. E(t)} \\
\hspace{-.3in}&&\qquad + \,\,\expect{d_c(t) [ \eta_i(E(t)-\theta) + Vp(t)]   \left.|\right. E(t)} \\
\hspace{-.3in}&& \qquad + \,\,\expect{r_c(t)\eta_i(E(t)-\theta)   \left.|\right. E(t) }\\
\hspace{-.3in}&& \qquad + \,\,\expect{VD(\tilde{L}(t), S(t)) \left.|\right. E(t)}.  
\end{eqnarray*}
Replacing $d_s(t)=[L(t)]^+ - d_l(t)$, and rearranging the terms, we have:
\begin{eqnarray*}
\hspace{-.3in}&&\Delta(t) + V\expect{ f(t) \left.|\right. E(t)} \\
\hspace{-.3in}&&\leq B -\expect{h_s(t) [\eta_e(E(t)-\theta) + Vq(t)]  \left.|\right. E(t)} \\
\hspace{-.3in}&&  \qquad+ \,\,\expect{ d_l(t) [Vp(t)  +\eta_e(E(t)-\theta)  ] \left.|\right. E(t)}  \\
\hspace{-.3in}&&\qquad + \,\,\expect{d_c(t) [ \eta_i(E(t)-\theta) + Vp(t)]   \left.|\right. E(t)} \\
\hspace{-.3in}&& \qquad + \,\,\expect{r_c(t)\eta_i(E(t)-\theta)   \left.|\right. E(t) }\\
\hspace{-.3in}&& \quad + \,\,\expect{VD(\tilde{L}(t), S(t)) - \eta_e(E(t)-\theta)\big[\tilde{L}(t)-r(t)\big]^+  \left.|\right. E(t)}. 
\end{eqnarray*}
We now similarly construct our algorithm by choosing actions to minimize the RHS of the drift inequality at every time slot. Doing so, we obtain the following algorithm.  

\underline{\textsf{Demand Response with ESM  (DR-ESM)}:} Use an energy storage of capacity $\theta+\eta_ic_{\textsf{char}}$. At every time $t$, do: 
\begin{enumerate}
\item Observe the energy level $E(t)$, the renewable energy $r(t)$ and the prices $p(t)$ and $q(t)$. Define the following weights: 
\begin{eqnarray}
\hspace{-.6in}&&W_h(t) = \eta_e(E(t)-\theta) + \frac{q(t)}{\epsilon}, \,W_l(t) = \eta_e(E(t)-\theta) +\frac{p(t)}{\epsilon},  \nonumber\\
\hspace{-.6in}&&W_c(t) = \eta_i(E(t)-\theta) + \frac{p(t)}{\epsilon}, \,W_r(t) =  \eta_i(E(t)-\theta), \label{eq:weight-def-dr}\\
\hspace{-.6in}&&W_D(t) = \eta_e(E(t)-\theta).  \nonumber
\end{eqnarray}

\item Chooses $\tilde{L}(t)$, $h_s(t)$, $d_l(t)$, $d_s(t)$, $d_c(t)$ and $r_c(t)$ to solve:
\begin{eqnarray}
\hspace{-.6in}&&\min:  VD(\tilde{L}(t), S(t)) -W_D(t)[\tilde{L}(t) - r(t)]^+  \label{eq:esm-dr}\\
\hspace{-.6in}&&\qquad\,\,\,-h_s(t) W_h(t) + d_l(t) W_l(t)+ d_c(t) W_c(t)+r_c(t)W_r(t)\nonumber\\
\hspace{-.6in}&&\,\,\text{s.t.}\quad  (\ref{eq:grid-cond}), (\ref{eq:charging-cond}), (\ref{eq:discharging-cond}), (\ref{eq:action-feasible}),  0\leq\tilde{L}(t)\leq L_{\textsf{max}}. \nonumber
\end{eqnarray}
Then, perform the chosen actions. 
$\Diamond$
\end{enumerate}
Notice that even with demand response, the \textsf{DR-ESM} algorithm still does not require any statistical knowledge of the system variables, and 
only requires the user  to solve a simple convex optimization program \emph{with $6$ variables and $6$ constraints}, which can be solved very efficiently. 
%

The performance of \textsf{DR-ESM} is summarized in the following theorem. 
\begin{theorem}\label{theorem:per-esm-dr}
Suppose $\theta$ is chosen according to (\ref{eq:theta-def}), and $0\leq E(0)\leq \theta+\eta_ic_{\textsf{char}}$. Then, under \textsf{DR-ESM}, we have: 
\begin{eqnarray}
\hspace{-.2in}&&0\leq E(t)\leq \theta + \eta_ic_{\textsf{char}},\,\,\forall\,t, \label{eq:energy-bound-dr}\\
\hspace{-.2in}&& \quad\,\,\, f_{\textsf{av}}^{\textsf{E-DR}} \leq f_{\textsf{av}}^*+B\epsilon.\label{eq:cost-bound-dr}
\end{eqnarray}
Here $f_{\textsf{av}}^{\textsf{E-DR}}$ is the time average cost achieved by \textsf{DR-ESM}, and $B=\frac{1}{2}[\eta_e^2c_{\textsf{dis}}^2 +  \eta_i^2c_{\textsf{char}}^2]$. $\Diamond$
\end{theorem}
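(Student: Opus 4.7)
The plan is to mirror the two-step template used for Theorem \ref{theorem:per-esm}: first establish the deterministic energy bound (\ref{eq:energy-bound-dr}) by induction on $t$, then derive the cost bound (\ref{eq:cost-bound-dr}) via the standard drift-plus-penalty comparison against a near-optimal stationary randomized policy. The upper bound in (\ref{eq:energy-bound-dr}) is immediate from exactly the same observation as in Theorem \ref{theorem:per-esm}: when $E(t)>\theta$, both $W_c(t)$ and $W_r(t)$ in (\ref{eq:weight-def-dr}) are strictly positive, so the minimizer of (\ref{eq:esm-dr}) picks $d_c^*(t)=r_c^*(t)=0$ and hence $E(t+1)\leq E(t)$; when $E(t)\leq\theta$, constraint (\ref{eq:charging-cond}) gives $E(t+1)\leq\theta+\eta_ic_{\textsf{char}}$ directly.

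For the lower bound, if $E(t)\geq\eta_e\min[L_{\textsf{max}},c_{\textsf{dis}}]$ then the total discharge is at most $\eta_e\min[L_{\textsf{max}},c_{\textsf{dis}}]$ regardless of $\tilde{L}^*(t)$, since $d_s^*(t)\leq[\tilde{L}^*(t)-r(t)]^+\leq L_{\textsf{max}}$ and (\ref{eq:discharging-cond}) still holds, giving $E(t+1)\geq 0$. Otherwise, by (\ref{eq:theta-def}) and (\ref{eq:theta-foo}) the five weights $W_h,W_l,W_c,W_r,W_D$ are all strictly negative; the negativity of $W_h$ forces $h_s^*(t)=0$, and the same perturbation argument as in Theorem \ref{theorem:per-esm} (if neither $d_c+r_c=c_{\textsf{char}}$ nor $d_l+d_c=c_{\textsf{grid}}$ were active, one could strictly decrease the objective by increasing $d_c$) shows that at the minimum either $d_l^*(t)=[\tilde{L}^*(t)-r(t)]^+$ (so $d_s^*(t)=0$ and $E(t+1)\geq E(t)$) or $d_l^*(t)+d_c^*(t)=c_{\textsf{grid}}$. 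In the latter case, the chain of inequalities in (\ref{eq:lower-bound-foo}) goes through verbatim after substituting $[L(t)]^+\mapsto[\tilde{L}^*(t)-r(t)]^+\leq L_{\textsf{max}}$ and invoking $\eta_e\geq\eta_i$ together with (\ref{eq:capacity-assumption}), delivering $E(t+1)\geq E(t)$. The main obstacle here is ruling out the worry that the new disutility term $VD(\tilde{L}(t),S(t))$ in (\ref{eq:esm-dr}) could let the joint minimizer pick a $\tilde{L}^*(t)$ that forces an unusually large discharge; this is resolved by the twin observations that $\tilde{L}^*(t)$ enters neither the weights nor the capacity constraints (\ref{eq:grid-cond})--(\ref{eq:discharging-cond}), and that the cap $\tilde{L}^*(t)\leq L_{\textsf{max}}$ together with (\ref{eq:capacity-assumption}) was precisely calibrated to dominate a full-$L_{\textsf{max}}$ residual load.

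For the cost bound, the standard Lyapunov comparison applies. By a Carath\'eodory-type argument on the set of time-average (cost, net-storage-drift) pairs attainable by stationary randomized policies whose actions depend only on the exogenous state $(p(t),q(t),r(t),S(t))$, for every $\delta>0$ there exists such a policy $\pi^*$ attaining cost at most $f_{\textsf{av}}^*+\delta$ and satisfying $\expect{\eta_e(d_s^{\pi^*}(t)+h_s^{\pi^*}(t))-\eta_i(d_c^{\pi^*}(t)+r_c^{\pi^*}(t))}=0$. Plugging $\pi^*$ into the drift inequality displayed just before the statement of \textsf{DR-ESM} and using that \textsf{DR-ESM} minimizes its right-hand side over all feasible actions yields $\Delta(t)+V\expect{f^{\textsf{DR-ESM}}(t)\left.|\right.E(t)}\leq B+V(f_{\textsf{av}}^*+\delta)$ with $V=1/\epsilon$. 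Taking expectations, telescoping in $t$, dividing by $VT$, using the already-proven boundedness of $E(t)$ to bound the telescoped Lyapunov terms by a constant, and letting $T\to\infty$ and then $\delta\to 0$ gives (\ref{eq:cost-bound-dr}). Everything else inherits directly from the proof of Theorem \ref{theorem:per-esm} and of (\ref{eq:cost-bound}) in Appendix B.
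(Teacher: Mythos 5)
Your proposal is correct and follows essentially the same route as the paper: the energy bounds are proved by the identical induction (upper bound via $W_c(t),W_r(t)>0$ when $E(t)>\theta$; lower bound via the sign of the weights, $h_s^*(t)=0$, the perturbation argument forcing $d_c^*(t)+r_c^*(t)=c_{\textsf{char}}$ or $d_l^*(t)+d_c^*(t)=c_{\textsf{grid}}$, and the chain (\ref{eq:lower-bound-foo}) with (\ref{eq:capacity-assumption})), and your key observation that $\tilde{L}^*(t)$ can be treated as a given load because it affects neither the weights nor the capacity constraints is exactly the paper's argument. The only cosmetic difference is in the cost bound, where you compare against a $\delta$-suboptimal stationary randomized policy and let $\delta\to 0$, whereas the paper invokes Theorem \ref{theorem:opt-stationary} with an exactly optimal stationary policy; the telescoping drift-plus-penalty computation is otherwise identical.
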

\begin{proof}
See Appendix B. 
\end{proof}

\section{\textsf{ESM} with Markovian system dynamics}\label{section:markov}
Here we extend the our results to the case when the system parameters, i.e., the quadruple  $(p(t), q(t), r(t), S(t))$ evolves according to a finite state irreducible and aperiodic  Markov chain. In this case, the following theorem shows that \textsf{DR-ESM} (and similarly \textsf{ESM}) still achieves similar performance as in the i.i.d. case. 
\begin{theorem}
Suppose  $(p(t), q(t), r(t), S(t))$ evolves according to a finite state irreducible and aperiodic  Markov chain. Also suppose $\theta$ is chosen according to (\ref{eq:theta-def}) and $0\leq E(0)\leq \theta+\eta_ic_{\textsf{char}}$. 
Then, under  \textsf{DR-ESM}, we have: 
\begin{eqnarray}
\hspace{-.2in}&&0\leq E(t)\leq \theta +\eta_ic_{\textsf{char}}, \,\,\forall\,t, \label{eq:energy-bound-markov}\\
\hspace{-.2in}&& \quad\,\,\, f_{\textsf{av}}^{\textsf{E-DR}} \leq f_{\textsf{av}}^*+O(\epsilon).\label{eq:cost-bound-markov}
\end{eqnarray}
Here $f_{\textsf{av}}^*$ is the optimal time average cost. $\Diamond$
\end{theorem}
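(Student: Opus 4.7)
The plan is to establish the two conclusions of the theorem separately. The storage bound \eqref{eq:energy-bound-markov} transfers directly from the i.i.d.\ analysis, while the cost bound \eqref{eq:cost-bound-markov} requires upgrading the one-slot drift-plus-penalty argument to a $T$-slot version in order to average over the mixing of the underlying Markov chain.

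First, I would observe that the inductive argument establishing \eqref{eq:energy-bound} in the proof of Theorem \ref{theorem:per-esm}, and its \textsf{DR} extension underlying Theorem \ref{theorem:per-esm-dr}, is a pure sample-path argument: it uses only the form of the weights $W_h, W_l, W_c, W_r, W_D$, the uniform bounds $r(t)\leq r_{\textsf{max}}$, $p(t)\leq p_{\textsf{max}}$, $q(t)\leq q_{\textsf{max}}$ and $\tilde L(t)\leq L_{\textsf{max}}$, the capacity assumption \eqref{eq:capacity-assumption}, and the choice of $\theta$ in \eqref{eq:theta-def}. Since none of these ingredients invokes the joint distribution of $(p(t),q(t),r(t),S(t))$, the same case analysis on the sign of $E(t)-\theta$ carries over verbatim and yields \eqref{eq:energy-bound-markov}.

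For the cost bound, the one-slot drift inequality used in Appendix B no longer closes, because a near-optimal stationary comparison policy must condition on the current Markov state $Z(t)\defequiv(p(t),q(t),r(t),S(t))$, and a single-step conditional expectation cannot average over the stationary distribution. My plan is to replace $\Delta(t)$ by the $T$-slot drift $\Delta_T(t)\defequiv \expect{G(t+T)-G(t)\,|\,E(t),Z(t)}$ for a $T$ to be chosen later. Telescoping the per-slot drift bound from Lemma \ref{lemma:basic} and using the deterministic increment bound $|E(\tau+1)-E(\tau)|\leq \max(\eta_e c_{\textsf{dis}},\eta_i c_{\textsf{char}})$, one obtains an inequality in which every energy-level factor $(E(\tau)-\theta)$ can be replaced by $(E(t)-\theta)$ up to an $O(T^2)$ correction. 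Plugging in the actions chosen by \textsf{DR-ESM} and exploiting its per-slot min-drift property then yields
\begin{eqnarray*}
&&\Delta_T(t)+V\sum_{\tau=t}^{t+T-1}\expect{f(\tau)\,|\,E(t),Z(t)} \\
&&\quad \leq\,\, BT + O(T^2) + V\sum_{\tau=t}^{t+T-1}\expect{f^{\pi^\star}(\tau)\,|\,Z(t)},
\end{eqnarray*}
where $\pi^\star$ is any feasible stationary randomized policy measurable with respect to $Z(\tau)$; existence of such a $\pi^\star$ with average cost arbitrarily close to $f_{\textsf{av}}^*$ follows from standard arguments for Markov-modulated constrained convex problems \cite{neelynowbook}.

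The main obstacle, and the step that truly distinguishes this proof from the i.i.d.\ case, is controlling $\sum_{\tau=t}^{t+T-1}\expect{f^{\pi^\star}(\tau)\,|\,Z(t)}$. Because $(Z(\tau))$ is finite-state, irreducible and aperiodic, it mixes geometrically: there exist $K<\infty$ and $\rho\in(0,1)$ with $\|\prob[Z(\tau)\in\cdot\,|\,Z(t)]-\pi_{\textsf{stat}}\|_{\textsf{TV}}\leq K\rho^{\tau-t}$, and the instantaneous cost under $\pi^\star$ is deterministically bounded. These two facts together give $\sum_{\tau=t}^{t+T-1}\expect{f^{\pi^\star}(\tau)\,|\,Z(t)}\leq T f_{\textsf{av}}^* + K'$ for a constant $K'$ independent of $T$, $V$ and $Z(t)$. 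Taking iterated expectation, summing the inequality over non-overlapping blocks of length $T$, dividing by $VT$, and letting $t\to\infty$ gives $f_{\textsf{av}}^{\textsf{E-DR}}\leq f_{\textsf{av}}^* + B\epsilon + O(T\epsilon) + O(\epsilon/T)$. Choosing $T$ as any fixed multiple of the chain's mixing time makes both error terms $O(\epsilon)$, establishing \eqref{eq:cost-bound-markov}. The delicate accounting is to ensure that the mixing constant $K'$ stays independent of $V=1/\epsilon$; this is precisely where the geometric mixing rate, and hence the finiteness of the state space together with aperiodicity, is essential.
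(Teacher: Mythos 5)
Your treatment of the storage bound \eqref{eq:energy-bound-markov} is correct and is exactly the paper's argument: the induction in the proofs of Theorems \ref{theorem:per-esm} and \ref{theorem:per-esm-dr} is a per-sample-path statement that never uses the distribution of $(p(t),q(t),r(t),S(t))$, so it transfers verbatim.

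The cost bound is where there is a genuine gap, and it is in the final accounting of your fixed-$T$ block argument. Track the constants: the mixing bound gives $\big|\sum_{\tau=t}^{t+T-1}\expect{f^{\pi^\star}(\tau)\,|\,Z(t)}-Tf_{\textsf{av}}^*\big|\leq K'$, but this sum enters the drift-plus-penalty inequality multiplied by $V$, so after dividing the block inequality by $VT$ the residual is $VK'/(VT)=K'/T=O(1/T)$, \emph{not} $O(\epsilon/T)$ as you wrote. The same issue appears in a term you dropped entirely: the comparison policy's charge/discharge balance $\sum_\tau\expect{\eta_e(d_s^{\pi^\star}+h_s^{\pi^\star})-\eta_i(d_c^{\pi^\star}+r_c^{\pi^\star})\,|\,Z(t)}$ is zero only under the stationary distribution, so conditioned on $Z(t)$ it is $O(1)$, and it is multiplied by $|E(t)-\theta|=O(\theta)=O(V)$, again leaving $O(1/T)$ after normalization. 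Combined with the $O(T^2)$ correction from replacing $E(\tau)$ by $E(t)$ (which gives $O(T/V)=O(T\epsilon)$), the achievable gap is $O(T\epsilon)+O(1/T)$. No choice of $T$ makes this $O(\epsilon)$: a fixed $T$ leaves a constant error, and optimizing $T\sim 1/\sqrt{\epsilon}$ only yields $O(\sqrt{\epsilon})$. This is precisely why the paper invokes the \emph{variable} multi-slot drift argument of \cite{huangneely_qlamarkovian} rather than fixed blocks: there the frames are defined by successive return times to a fixed recurrent state of the chain, the renewal-reward theorem makes the per-frame expectations of $f^{\pi^\star}$ and of the charge/discharge balance \emph{exactly} equal to their stationary averages (eliminating both $O(1/T)$-type residuals), and the finite second moment of the return time keeps the quadratic correction at $O(\expect{T^2})=O(1)$ per frame, which after dividing by $V\expect{T}$ gives the claimed $O(\epsilon)$. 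To repair your proof you would need to replace the deterministic blocks by these renewal frames (or otherwise show the two $O(1/T)$ residuals are in fact $O(V^{-1}/T)$, which geometric mixing alone does not give you).
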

\begin{proof}
The result (\ref{eq:energy-bound-markov}) follows directly from Theorem \ref{theorem:per-esm} and Theorem \ref{theorem:per-esm-dr} because it is a sample path result. (\ref{eq:cost-bound-markov}) can be proven using the variable multi-slot drift argument development in \cite{huangneely_qlamarkovian}. The details are omitted for brevity. 
\end{proof}

\section{Simulation}\label{section:simulation}
Here we present simulation results. For simplicity, we only simulate 
\textsf{DR-ESM}. 
We assume that the load consuming entity's consumption level is equivalent to  $10$ residential users, e.g., the entity is an apartment building, or a small commercial building. Every time slot is assumed to be one hour.  

We generate time-varying hourly buying prices by assuming that the daily average hourly price is $12\cent/$kWh (according to PG$\&$E's residential electricity report \cite{pge-web}) and that the distribution is uniform over the annual average day-ahead hourly price of PG$\&$E in the year $2010$ (data from the CAISO daily report on the Federal Energy Regulatory Commission (FERC) website \cite{caiso-web}). The values are shown in Fig. \ref{fig:price-renewable}. 
 We then assume that the selling price at every time is the same as the buying price. 
\begin{figure}[cht]
\centering
\vspace{-.1in}
\includegraphics[height=1.9in, width=3.3in]{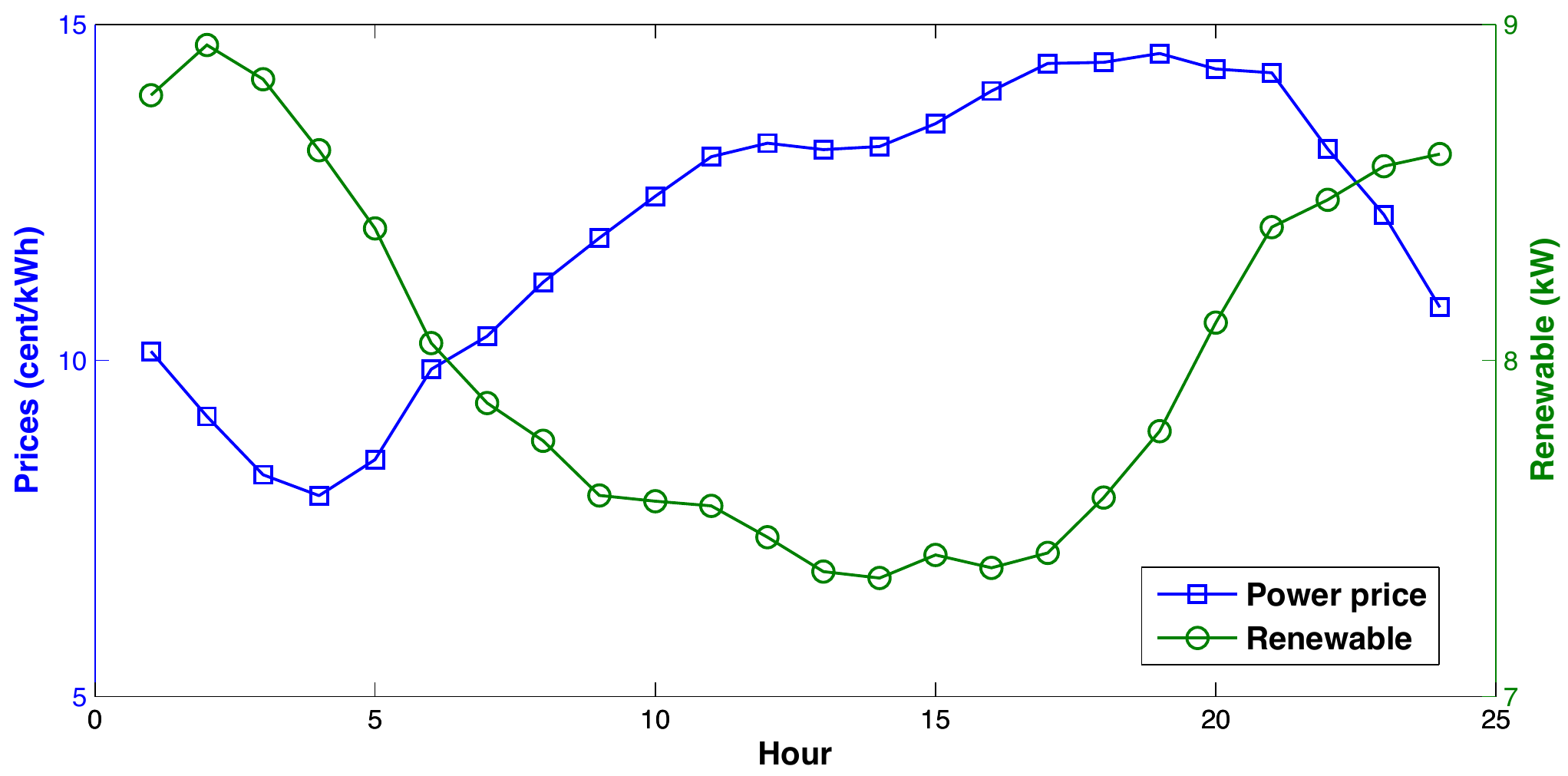}
\caption{The price distribution and the renewable energy pattern}\label{fig:price-renewable}
\end{figure}

We assume that the user has a wind turbine with a power capacity of $9$kW, i.e., $r_{\textsf{max}}=9$. \footnote{Since the entity roughly represents $10$ household users, this capacity can represent the total capacity of several home wind turbines.} 
We similarly assume that the wind power has a uniform distribution over the values shown in the  curve in Fig. \ref{fig:price-renewable}, which is obtained by averaging and normalizing the $2006$ annual recorded wind power of $100$ wind turbines located near $40.42N, 124.39W$ at  the west coast in California, which has an aggregate capacity of $300$MW (data from the National Renewable Energy Laboratory (NREL) website \cite{wind-web}), and has a mean value $8$kW. 
We assume that $\eta_e=1.25$ and $\eta_i=0.8$, corresponding to $80\%$ efficiency for both charging and discharging for the storage, and that $c_{\textsf{grid}}=20$kW, $c_{\textsf{char}}=c_{\textsf{dis}}=12$kW. 

We assume that $S(t)$ takes two values ``H=High'' and ``L=Low'' with equal probabilities, e.g., representing high or low temperature, and that the disutility function is of the form given in (\ref{eq:dis-utility-ex}) with $\beta_{H}=\beta_L=1$, i.e., 
\begin{eqnarray}
D(\tilde{L}(t), S(t)) =  \big( L^{S(t)}_{\textsf{target}}-\tilde{L}(t) \big)^2,  
\end{eqnarray}
with $L^{H}_{\textsf{target}}=12$kW, and $L^{L}_{\textsf{target}}=8$kW. We assume that $L_{\textsf{max}}=12$kW. \footnote{Since we assume each slot is one hour, expressing the consumption level in units of kW is the same as in kWh.} These numbers are chosen based on the overall  $2009$ California residential average power consumption, which is $18.6$kWh per day per user, or $0.8$kWh per hour per user \cite{saturationstudy-kema}.  To make the units consistent, we assume that the utility is also measured in units of cents. 

According to (\ref{eq:theta-def}), we use a storage of size $\theta+\eta_ic_{\textsf{char}}=18V+24.6$kWh. 
Below we simulate $V=\{2, 5, 10, 20, 50\}$. 
Our simulation is done with \texttt{CVX} \cite{cvx}, and each simulation is run for $10^4$ slots. 
For comparison purposes, we also simulate the case when there is no energy storage. In this case, we assume that the user at every time slot chooses the load according to the following \textsf{Greedy} scheme: 
\begin{eqnarray}
\tilde{L}(t)\in\arg\min D(\tilde{L}(t), S(t)) + p(t)(\tilde{L}(t) - r(t))^+. 
\end{eqnarray}
That is, the user is trying to minimize the instant cost every time slot. 

\begin{figure}[cht] 
\centering
\vspace{-.1in}
\includegraphics[height=1.9in, width=3.3in]{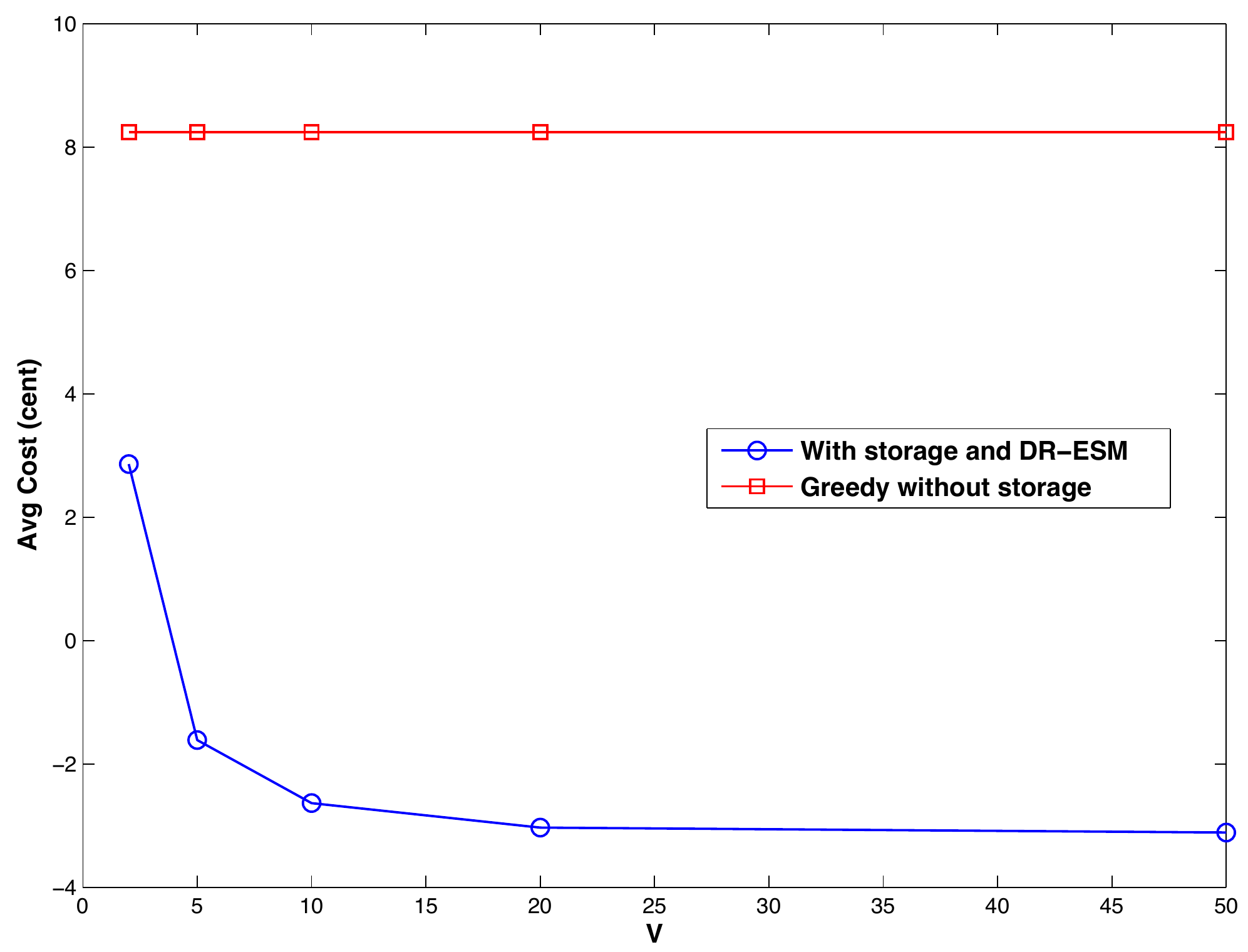}
\caption{Average cost under \textsf{DR-ESM} and without storage.}\label{fig:avg-cost}
\end{figure}

Fig. \ref{fig:avg-cost} shows the average costs under \textsf{DR-ESM} and \textsf{Greedy}. We see that  \textsf{DR-ESM} is able to reduce the average cost by $64\%-136\%$. For instance, when $V=5$, \textsf{DR-ESM} reduces the average cost from $8.24\cent$ down to $-1.61\cent$, which corresponds to a saving of $120\%$. 
%
The reason that the reduction can exceed $100\%$ is because with energy storage and \textsf{DR-ESM}, the user can actually make profit by carefully buying and selling power. 
We also note that such significant saving is achieved with a moderate storage capacity. For instance, when $V=5$, the provisioned storage capacity is $118.35$kWh. Considering the fact that the user represents roughly $10$ residential users, this only requires each individual user to have a battery of size $12$kWh.

Fig. \ref{fig:energy-level} also shows a sample path energy level process under \textsf{DR-ESM} with $V=5$ in time slots $[101, 400]$. We note that the energy level is always below the provisioned capacity and never drops below $0$. 
One interesting observation is that we can even implement \textsf{DR-ESM} with a storage of size $75$kWh.  This indicates that our algorithm can likely be implemented with much smaller storages. 
\begin{figure}[cht] 
\centering
\vspace{-.1in}
\includegraphics[height=1.9in, width=3.3in]{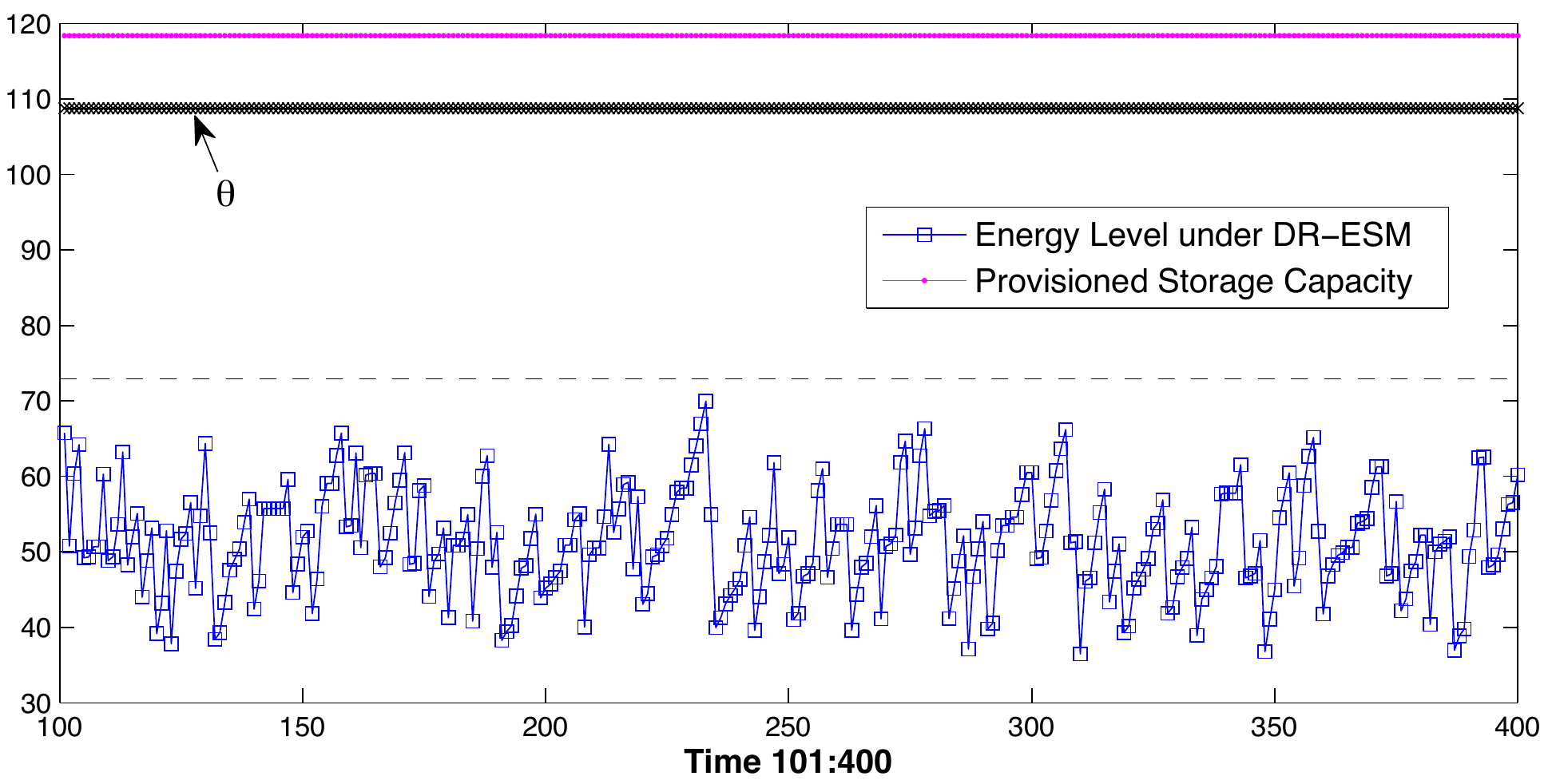}
\caption{A sample path energy level process under \textsf{DR-ESM} with $V=5$.}\label{fig:energy-level}
\end{figure}

\vspace{-.1in}
\section{Conclusion}\label{section:conclusion}
In this paper, we developed optimal energy management and demand response schemes for general power consuming systems with finite energy storage and renewable energy. Based on the Lyapunov optimization technique, we developed two very light-weight energy management schemes \textsf{ESM} and \textsf{DR-ESM} for load-serving and demand-response, respectively. Both schemes only require the user to solve a simple convex optimization program for decision making, and allow us to explicitly compute the required energy storage size. We proved that both schemes are able to achieve near-optimal performance.

\section*{Appendix A - Proof of Lemma \ref{lemma:basic}}
Here we prove Lemma \ref{lemma:basic}. 
\begin{proof} (Lemma \ref{lemma:basic})
Using (\ref{eq:storage-dyn}), we see that: 
\begin{eqnarray}
&& E(t+1)-\theta = E(t) -\theta \\
&& \qquad\quad  -\eta_e(d_s(t))+h_s(t)) + \eta_i(d_c(t)+r_c(t)).\nonumber
\end{eqnarray}
Square both sides of the above, we get:
\begin{eqnarray}
\hspace{-.3in}&&(E(t+1)-\theta)^2 \label{eq:energy-drift}\\
\hspace{-.3in}&&= (E(t)-\theta)^2 + [\eta_e(d_s(t))+h_s(t)) - \eta_i(d_c(t)+r_c(t))]^2\nonumber\\
\hspace{-.3in}&& \quad - 2(E(t)-\theta) [\eta_e(d_s(t))+h_s(t)) - \eta_i(d_c(t)+r_c(t))]. \nonumber
\end{eqnarray}
Now using (\ref{eq:charging-cond}) and (\ref{eq:discharging-cond}), we see that:  
\begin{eqnarray*}
[\eta_e(d_s(t))+h_s(t)) - \eta_i(d_c(t)+r_c(t))]^2\leq \eta_e^2c_{\textsf{dis}}^2 +  \eta_i^2c_{\textsf{char}}^2. 
\end{eqnarray*}
Thus, by defining $B\triangleq \frac{1}{2}[\eta_e^2c_{\textsf{dis}}^2 +  \eta_i^2c_{\textsf{char}}^2]$, multiplying both sides of (\ref{eq:energy-drift}) by $\frac{1}{2}$, and using the above, we get:
\begin{eqnarray*}
\hspace{-.3in}&&\frac{1}{2}[(E(t+1)-\theta)^2 - (E(t)-\theta)^2]\\
\hspace{-.3in}&&\quad\leq B - (E(t)-\theta )[\eta_e(d_s(t))+h_s(t)) - \eta_i(d_c(t)+r_c(t))]. 
\end{eqnarray*}
Taking expectations over the randomness of the  actions conditioning in $E(t)$, and using the definition of $\Delta(t)$, we prove Lemma \ref{lemma:basic}. 
\end{proof}

\vspace{-.1in}
\section*{Appendix B - Proof of (\ref{eq:cost-bound}) of Theorem \ref{theorem:per-esm}}
In this subsection we prove (\ref{eq:cost-bound}) of Theorem \ref{theorem:per-esm}. 
To do so, we first have the following theorem, which can be proven using a similar augment as in \cite{huangneely-spn-per}. 

\begin{theorem}\label{theorem:opt-stationary}
There exists a stationary and randomized energy management policy $\Pi$ that achieves the following: 
\begin{eqnarray}
\hspace{-.3in}&&\expect{f^{\Pi}(t)} = f_{\textsf{av}}^*, \label{eq:stat-per}\\
\hspace{-.3in}&& \expect{\eta_e (d^{\Pi}_s(t)+h^{\Pi}_s(t)) + \eta_i (d^{\Pi}_c(t)+r^{\Pi}_c(t))}=0. \label{eq:stat-queue}
\end{eqnarray}
Here the expectation is taken over the random system dynamics and the potential randomness of the charing/discharging and purchasing/selling actions. $\Box$
\end{theorem}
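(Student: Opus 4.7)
The plan is to exhibit $\Pi$ as a minimizer over a compact convex region of achievable expected cost/drift vectors realizable by stationary randomized policies, invoking standard Caratheodory-type machinery for i.i.d. systems. For each realization of the system state $s=(p,q,r,S,\tilde L)\in\script{S}$, let $\script{A}(s)$ denote the set of feasible one-slot action tuples $(d_l,d_s,d_c,h_s,r_c)$ satisfying (\ref{eq:grid-cond})--(\ref{eq:action-feasible}); each $\script{A}(s)$ is a bounded polyhedron, hence compact and convex. A stationary randomized policy $\Pi$ is specified by a collection of probability measures $\{\mu_s\}_{s\in\script{S}}$ with $\mu_s$ supported on $\script{A}(s)$, and under the i.i.d. state law with marginal $\pi(s)$ it induces a per-slot expected cost $\bar f(\Pi)=\sum_s\pi(s)\,\expect{f(a,s)\,|\,a\sim\mu_s}$ and an expected storage drift $\bar\delta(\Pi)=\sum_s\pi(s)\,\expect{\eta_e(d_s+h_s)-\eta_i(d_c+r_c)\,|\,a\sim\mu_s}$.

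Next I would characterize the achievable region $\script{R}=\{(\bar f(\Pi),\bar\delta(\Pi)):\Pi\text{ stationary randomized}\}$. Since both $f(a,s)$ and the per-slot drift are linear in $a$ for each fixed $s$, only the per-state mean action $\bar a_s=\expect{a\,|\,a\sim\mu_s}\in\script{A}(s)$ determines $(\bar f(\Pi),\bar\delta(\Pi))$. Hence $\script{R}$ is the image of the compact convex product $\prod_{s}\script{A}(s)$ under a linear (cost, drift) map, and is itself compact and convex. The key lemma to establish is that the long-run time-averaged (cost, drift) of \emph{any} feasible (history-dependent) policy also lies in $\script{R}$. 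This is the standard Caratheodory/weak-limit step invoked in \cite{huangneely-spn-per} and \cite{neelynowbook}: along a sample path one forms the empirical joint distribution of state-action pairs, uses compactness of each $\script{A}(s)$ together with the i.i.d. state law to extract a weak subsequential limit, and reads off a stationary randomized policy whose $(\bar f,\bar\delta)$ matches the time averages of the original policy.

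Finally, because the energy level $E(t)$ is trapped in a finite interval $[0,E_{\textsf{max}}]$ under any feasible scheme (feasibility forces this by the \textsf{EA} constraint together with the capacity bound), telescoping (\ref{eq:storage-dyn}) and dividing by $T\to\infty$ forces the long-run average of $-\eta_e(d_s+h_s)+\eta_i(d_c+r_c)$ to vanish, so every feasible policy's limit point lies in the closed slice $\script{R}\cap\{\bar\delta=0\}$. This slice is nonempty (since $f_{\textsf{av}}^*$ is defined as an infimum over feasible policies, and the corresponding time averages embed into it) and compact as a closed subset of a compact set, so the continuous linear functional $\bar f$ attains its minimum on it at some stationary randomized policy $\Pi$; that $\Pi$ is the one claimed in (\ref{eq:stat-per})--(\ref{eq:stat-queue}). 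The main obstacle is the Caratheodory/weak-limit step that realizes any feasible policy's time averages by a stationary randomized rule: verifying tightness of the empirical distributions and ensuring that the limiting conditional distributions $\{\mu_s\}$ form a bona fide randomized policy is routine but delicate, and is the usual technical burden of this style of existence theorem.
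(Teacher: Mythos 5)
Your proposal is correct and follows essentially the same route the paper takes: the paper offers no proof of its own beyond citing the standard stationary-randomized-policy existence argument for i.i.d. systems (compact convex achievable region of per-slot (cost, drift) pairs, Caratheodory/weak-limit realization of any feasible policy's time averages, and the telescoping argument showing finite storage forces zero average drift), which is exactly what you outline. The only loose end is that your construction directly yields $\expect{f^{\Pi}(t)}\leq f_{\textsf{av}}^*$ rather than equality (the zero-drift stationary policy need not respect the (\textsf{EA}) constraint sample-path-wise, so the reverse inequality is not immediate); this is a known slack in how such theorems are stated, and the $\leq$ direction is all that the paper's Appendix B actually uses.
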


We remark that Theorem \ref{theorem:opt-stationary} holds for both the load-serving case as well as the demand response case. 
Note that although Theorem \ref{theorem:opt-stationary} shows that there exists such an optimal policy $\Pi$, it may not be implementable in practice. This is because finding it requires knowing all the statistical knowledge of the system parameters, including the prices, the loads,  and the renewable energy, and the required energy storage size is very difficult to compute (may also be infinite). In this case, our \textsf{ESM} algorithm provides a low-complexity alternative to achieve a similar performance. 

We now use Theorem \ref{theorem:opt-stationary} to prove  (\ref{eq:cost-bound}) of Theorem \ref{theorem:per-esm}. 
\begin{proof} ((\ref{eq:cost-bound}) of Theorem \ref{theorem:per-esm}) 
We recall (\ref{eq:esm-drift-foo1}) as follows: 
\begin{eqnarray}
\hspace{-.2in}&&\Delta(t) + V\expect{ f(t) \left.|\right. E(t)} \label{eq:esm-drift-foo2}\\
\hspace{-.2in}&&\leq B + V\expect{p(t) [d_l(t) +d_c(t)]  - q(t) h_s(t) \left.|\right. E(t)}\nonumber\\
\hspace{-.2in}&&\qquad -\, (E(t) -\theta ) \expect{  \eta_e (d_s(t) + h_s(t)) \nonumber\\
\hspace{-.2in}&& \qquad\qquad\qquad\qquad\qquad\quad  - \eta_i d_c(t) -\eta_ir_c(t) \left.|\right. E(t)}.\nonumber
\end{eqnarray}
Now since \textsf{ESM} is chosen to minimize the RHS of (\ref{eq:esm-drift-foo2}), the value of the RHS is no larger than that under the policy $\Pi$. Thus, (\ref{eq:esm-drift-foo2}) holds when we plug in the actions chosen by policy $\Pi$. This yields: 
\begin{eqnarray}
\hspace{-.2in}&&\Delta(t) + V\expect{ f^{\textsf{ESM}}(t) \left.|\right. E(t)} \label{eq:esm-drift-foo3}\\
\hspace{-.2in}&&\leq B + V\expect{p(t) [d^{\Pi}_l(t) +d^{\Pi}_c(t)]  - q(t) h^{\Pi}_s(t) \left.|\right. E(t)}\nonumber\\
\hspace{-.2in}&&\qquad -\, (E(t) -\theta ) \expect{  \eta_e (d^{\Pi}_s(t) + h^{\Pi}_s(t)) \nonumber\\
\hspace{-.2in}&& \qquad\qquad\qquad\qquad\qquad\quad  - \eta_i d^{\Pi}_c(t) -\eta_ir^{\Pi}_c(t) \left.|\right. E(t)}\nonumber\\
\hspace{-.2in}&&=B + Vf_{\textsf{av}}^*. 
\end{eqnarray}
Here in the last step we have used (\ref{eq:stat-per}) and (\ref{eq:stat-queue}) in Theorem \ref{theorem:opt-stationary}.  
Therefore, taking an expectation over $E(t)$ on both sides, and summing the above over $t=0, ..., T-1$, we get: 
\begin{eqnarray}
\expect{G(T) - G(0)} + \sum_{t=0}^{T-1}V\expect{ f^{\textsf{ESM}}(t)}  \leq TB + TVf_{\textsf{av}}^*. 
\end{eqnarray}
Rearranging the terms and dividing both sides by $TV$, we see that:
\begin{eqnarray}
\frac{1}{T} \sum_{t=0}^{T-1}\expect{ f^{\textsf{ESM}}(t)}  \leq f_{\textsf{av}}^* + \frac{B}{V}+\frac{\expect{G(0)}}{VT}. 
\end{eqnarray}
Taking a $\limsup$ as $T\rightarrow\infty$ and using the fact that $\expect{G(0)}<\infty$, we prove (\ref{eq:cost-bound}). 
\end{proof}

\vspace{-.1in}
\section*{Appendix - Proof  of Theorem \ref{theorem:per-esm-dr}}
In this section, we prove Theorem \ref{theorem:per-esm-dr}. The proof is very similar to the one of Theorem  \ref{theorem:per-esm-dr} except for the lower bound of (\ref{eq:energy-bound-dr}). Hence, we only present the proof of this part. 
\begin{proof} (Theorem \ref{theorem:per-esm-dr}) 
Let $\tilde{L}^*(t)$, $d_l^*(t)$, $d^*_c(t)$, $d^*_s(t)$, $h_s^*(t)$, $r^*_c(t)$ be an optimal solution of (\ref{eq:esm-dr}). 
We first notice that $\tilde{L}^*(t)$ will not directly affect the other actions. 
Hence, we can treat $\tilde{L}^*(t)$ as a given load. 

Now we prove the lower bound using induction. Assume the lower bound holds at time $t$. 
\begin{enumerate}
\item Suppose $E(t)\geq \eta_e\min[L_{\textsf{max}}, c_{\textsf{dis}}]$. 
Since  the maximum amount that can be discharged from the storage is $ \eta_e\min[L_{\textsf{max}}, c_{\textsf{dis}}]$, we see that $E(t+1)\geq0$.

\item  Suppose $E(t)< \eta_e\min[L_{\textsf{max}}, c_{\textsf{dis}}]$. In this case, we see from (\ref{eq:theta-def}) that:  
\begin{eqnarray}
\eta_e(E(t)-\theta), \eta_i(E(t)-\theta)<-  \frac{\max[p_{\textsf{max}}, q_{\textsf{max}}]}{\epsilon}.
\end{eqnarray}
We similarly see from (\ref{eq:theta-foo}) that:  
\begin{eqnarray}
W_h(t), W_l(t), W_c(t), W_r(t)<0. 
\end{eqnarray}
Note here we have $W_l(t)$ instead of $W_s(t)$. In this case, we similarly see that $h^*_s(t)=0$, 
and that either (i) $d_c^*(t)+r^*_c(t)=c_{\textsf{char}}$ or (ii) $d^*_l(t)+d_c^*(t)=c_{\textsf{grid}}$.

Suppose $d^*_l(t)+d_c^*(t)=c_{\textsf{grid}}$, then using (\ref{eq:lower-bound-foo}), we see that $E(t+1)\geq E(t)\geq0$. 
Else suppose $d_c^*(t)+r_c^*(t)=c_{\textsf{char}}$.  
From the objective function (\ref{eq:esm-1}) and the constraints (\ref{eq:esm-cond-1}) and (\ref{eq:esm-cond-2}), we again see that $d^*_l(t)$ should be as large as possible since $W_l(t)<0$ in (\ref{eq:esm-1}). Hence, we have: 
\begin{eqnarray}
d^*_l(t) = \min[c_{\textsf{grid}}- d^*_c(t), [L(t)]^+]. 
\end{eqnarray}
Using the proof of Theorem \ref{theorem:per-esm}, we see that this implies $E(t+1)\geq E(t)\geq0$. 
\end{enumerate}
The rest of the proof follows similarly as in the proof of Theorem \ref{theorem:per-esm}. 
\end{proof}

$\vspace{-.2in}$
\bibliographystyle{unsrt}
\bibliography{../grid-ref}

\end{document}